\documentclass[12pt]{article}
\usepackage{amssymb,amsthm,hyperref,amsmath,bm,amsfonts}
\usepackage{arydshln}
\usepackage{verbatim}
\usepackage{graphicx}
\usepackage{float}

\usepackage{subfigure}
\usepackage{multirow}
\usepackage{subfigure}
\usepackage{color}
\usepackage{appendix}

\newtheorem{theorem}{Theorem}[section]
\newtheorem{lemma}[theorem]{Lemma}


\newtheorem{remark}{Remark}[section]
\numberwithin{equation}{section}

\setlength{\textheight}{21.6cm} \setlength{\textwidth}{16cm}
\hoffset=-1.35cm \voffset=-2.0cm

\title{Feedback boundary control of 2-D hyperbolic systems with relaxation}
\author{
Haitian Yang\thanks{E-mail: yht21@mails.tsinghua.edu.cn}\\
\small{\textit{Department of Mathematical Sciences, Tsinghua University, Beijing 100084, China.}}\\ \\
	Wen-An Yong\thanks{E-mail: wayong@tsinghua.edu.cn}\\
\small{\textit{Department of Mathematical Sciences, Tsinghua University, Beijing 100084, China.}}\\
\small{\textit{Yanqi Lake Beijing Institute of Mathematical Sciences and Applications, Beijing 101408, China.}}
}

\date{\today}
\begin{document}
\maketitle{}

 \begin{abstract}
This paper is concerned with boundary stabilization of two-dimensional hyperbolic systems of partial differential
equations. By adapting the
Lyapunov function previously proposed by the second author for linearized hyperbolic systems with relaxation structure, we derive certain control laws so that the corresponding solutions decay exponentially in time. The result is illustrated with
an application to water flows in open channels. 
 \end{abstract}

 \hspace{-0.5cm}\textbf{Keywords:}
 \small{Boundary stabilization,
 	Control laws,
 	Hyperbolic relaxation systems,
 	Structural stability condition,
 	2-D Saint-Venant equations.}\\


\section{Introduction}
We are interested in boundary stabilization of 
two-dimensional hyperbolic partial differential equations with relaxation in bounded domains. Such problems have recently attracted much interest in the mathematical and engineering community due to wide applications. Typical examples are
the Saint–Venant equations and related models for open channels \cite{coron2007strict,DEHALLEUX20031365,DIAGNE2012109,DIAGNE2017345,DIAGNE2016130,dos2008boundary,PRIEUR201844}, the Aw-Rascle equations for road traffic \cite{yu2019traffic,yu2022traffic}, gas dynamics \cite{gugat2011existence}, supply chains \cite{coron2012controllability} or heat exchanges \cite{xu2002exponential}.


As far as we know, the existing works mainly deal with the spatially one-dimensional problems. Many of these works are based on the backstepping method \cite{7366757,krstic2008boundary}. This method converts the original hyperbolic system to a target system via an elaborate Volterra transformation, the stabilization for the target system can be easily achieved through homogeneous boundary conditions, and then control laws for the original system are derived according to the transformation. Another approach is the Lyapunov function method \cite{bastin2016stability}. This method can guide us to design proper boundary conditions (control laws) which ensure the exponential time-decay of the corresponding solutions. It is valid for hyperbolic partial differential equations coupled through source terms in certain special ways \cite{DIAGNE2012109}, while the backstepping method can handle arbitrarily strong coupling systems \cite{7366757}. For state-of-the-art developments of these approaches, we refer to \cite{AURIOL2016300,DEUTSCHER201754,DIAGNE2017345,doi:10.1137/15M1012712,yu2022traffic} for the backstepping method, and to \cite{BASTIN201766,8706524,coron2022lyapunov,HAYAT201952} for the Lyapunov function method.

However, multi-dimensional problems are more realistic and therefore cannot be evaded. For example, consider water flows in a river which can be regarded as a two-dimensional domain. An illustration of the watercourse between two sluice gates is shown in Figure \ref{figure1}. Spillways can be constructed on river banks. These hydraulic facilities serve to regulate flow rates or depth of water. The sluice gates, spillways and river banks constitute the boundary of the two-dimensional domain and the regulations can be considered as boundary conditions.

\begin{figure}
	\begin{center}
		\includegraphics[width=7cm]{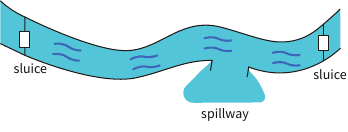}     
		\label{figure1}
		\caption{Watercourse}                             
	\end{center}                                 
\end{figure}

In attempting to study the multi-dimensional problems, we immediately encounter two difficulties. First, the aforementioned methods heavily rely on the diagonalizable feature of one-dimensional hyperbolic systems, and hence seem to be invalid in multi-dimensional cases. Second, the boundary for the multi-dimensional problems is much more complicated than that for the one-dimensional case where it usually consists of only two endpoints. 

In contrast, the recently proposed Lyapunov function in \cite{YONG2019252} can be easily generalized to the multi-dimensional hyperbolic systems satisfying the structural stability condition \cite{yong1999singular}.  As shown in \cite{yong47basic,yong2008interesting},
this class of systems covers many physically relevant equations, although it is not as general as that studied in \cite{7366757}.
By the way, we refer to  \cite{HERTY201612,WANG2020104815} for other recent works exploiting the structural stability condition to achieve boundary stabilization in the one-dimensional case.

In this paper, we generalize the results in \cite{YONG2019252} to achieve boundary stabilization for the two-dimensional problems. As an application, we derive certain control laws for the two-dimensional Saint-Venant equations with small river velocity. 

In comparison, other multi-dimensional works known to us are \cite{xu2002exponential} and \cite{herty2022stabilization}. In \cite{xu2002exponential}, boundary stabilization was achieved for dissipative symmetric hyperbolic systems under certain ad hoc assumptions (see formulas (2.28) and (2.32) therein). In \cite{herty2022stabilization} a key assumption is that the multiple coefficient matrices corresponding to the spatial variables can be diagonalized simutaneously. These assumptions limit the applicability of their results in \cite{herty2022stabilization,xu2002exponential}.

\section{Preliminaries}  \label{se2}
Consider a linear system of first-order partial differential equations:
\begin{equation} \label{1}	\tilde{U}_t+\tilde{A}_1\tilde{U}_x+\tilde{A}_2\tilde{U}_y=\tilde{Q}\tilde{U}
\end{equation}
defined on $(t,x,y) \in [0,\infty ) \times \Omega.$ Here, $\tilde{U}=\Tilde{U}(t,x,y)\in \mathbb{R}^N$ is the unknown, $\tilde{A}_1,\tilde{A}_2$ and $\tilde{Q}$ are given constant  matrices in $\mathbb{R}^{N\times N}$, and $\Omega$ is a bounded domain with a piecewise smooth boundary. A typical example is polygon.

For the above system, we refer to \cite{yong47basic,yong2008interesting} and assume that it satisfies the structural stability condition proposed in \cite{yong1999singular}. Namely, there exists an invertible matrix $\tilde{P} \in \mathbb{R}^{N \times N}$ such that
\begin{equation} \label{2}
	\tilde{P}\tilde{Q}\tilde{P}^{-1}=-\begin{pmatrix}
		0 & 0 \\
		0 & e
	\end{pmatrix}
\end{equation}
with $e \in \mathbb{R}^{r \times r}$ invertible $(0<r<N)$;
there exists a symmetric positive definite matrix $\tilde{A}_0 \in \mathbb{R}^{N \times N}$ such that both $\tilde{A}_0\tilde{A}_1$ and $\tilde{A}_0\tilde{A}_2$ are symmetric; and 
\begin{equation*}
	\tilde{A}_0\tilde{Q}+\tilde{Q}^T\tilde{A}_0 \leq -\tilde{P}^T \begin{pmatrix}
		0 & 0 \\
		0 & I_r
	\end{pmatrix}\tilde{P}.
\end{equation*}
Here and below, the superscript $^T$ denotes the transpose of the matrix and  $I_r$ stands for the unit matrix of order $r$. Note that the existence of the symmetrizer $\tilde{A}_0$ ensures the hyperbolicity of system (\ref{1}). \\

\begin{remark}
	As shown in \cite{yong47basic,yong1999singular,yong2008interesting}, the structural stability condition has its root in non-equilibrium thermodynamics and is fulfilled by many classical models from mathematical physics. Examples occur in kinetic theories (both moment closure systems and discrete velocity models), gas dynamics with damping or with relaxation, chemically reactive flows, radiation hydrodynamics, relativistic dissipative fluid dynamics, magnetohydrodynamics, nonlinear optics, traffic flows, river flows \cite{yong47basic,yong1999singular,yong2008interesting}, compressible viscoelastic fluid dynamics \cite{yong2014newtonian}, probability theory and axonal transport \cite{YONG2019252}, certain chemical exchange processes \cite{xu2002exponential}, and so on. Namely, the stability condition above defines a wide class of physically relevant problems, although this class is not as general as that studied in \cite{7366757}. Therefore, it is reasonable to study such hyperbolic systems under the structural stability condition.
	
\end{remark}

Under the stability condition, it was proved in \cite{yong1999singular} that $A_0=\tilde{P}^{-T}\tilde{A}_0\tilde{P}^{-1}$ is a block-diagonal matrix with the same partition as in (\ref{2}).

Set $U=\tilde{P}\tilde{U},A_1=\tilde{P}\tilde{A}_1\tilde{P}^{-1}$ and $A_2=\tilde{P}\tilde{A}_2\tilde{P}^{-1}.$ It follows from (\ref{1}) that
\begin{equation} \label{3}
	U_t+A_1U_x+A_2U_y=-\begin{pmatrix}
		0\ 0 \\ 0 \ e
	\end{pmatrix}U,
\end{equation}
or 
\begin{equation} \label{4}
	\begin{aligned}
		u_t+a_1u_x+b_1q_x+a_2u_y+b_2q_y&=0,\\[4mm]
		q_t+c_1u_x+d_1q_x+c_2u_y+d_2q_y&=-eq.
	\end{aligned}
\end{equation}  
Here $\ u=u(t,x,y)  :  [0,\infty ) \times \Omega \rightarrow \mathbb{R}^{N-r} $ , $ q=q(t,x,y)  :  [0,\infty ) \times \Omega \rightarrow \mathbb{R}^{r},$
and $a_i,b_i,c_i,d_i(i=1,2)$ are matrices with proper dimensions; $U=\begin{pmatrix} u \\ q \end{pmatrix}, A_1=\begin{pmatrix}
	a_1 & b_1 \\
	c_1 & d_1
\end{pmatrix}$ and $A_2=\begin{pmatrix}
	a_2 & b_2 \\
	c_2 & d_2
\end{pmatrix}.$ 

For system (\ref{3}) or (\ref{4}), it is not difficult to see that the structural stability condition is composed of the following two items.
\begin{enumerate} 
	\item [(i)] There exists a symmetric positive definite matrix $A_0=\begin{pmatrix}
		X_1 &  0 \\ 0 & X_2  
	\end{pmatrix}$, with $X_1 \in \mathbb{R}^{(N-r) \times (N-r)}, X_2 \in \mathbb{R}^{r \times r}$ , such that both $A_0A_1$ and $A_0A_2$ are symmetric;
	
	\item [(ii)]   $X_2e+e^TX_2 $ is positive definite.
\end{enumerate}

Moreover, we assume that 
\begin{enumerate} 
	\item [(iii)]	 There exist real numbers $\alpha$ and $\beta$ so that the $(N-r) \times (N-r)$-matrix $\alpha a_1+\beta a_2$ has only negative eigenvalues. \\
\end{enumerate} 

\begin{remark}
	Assumption (iii) is a generalization of the assumption (A3) in \cite{YONG2019252}. It is technical and might be replaced by other weaker assumptions. Note that the structural stability condition ensures that $X_1(\alpha a_1+ \beta a_2)$ is symmetric and therefore $\alpha a_1+ \beta a_2$ has only
	real eigenvalues. A special case is $a_1$(or $a_2$) only has negative(or positive) eigenvalues, this case is the assumption (A3) in \cite{YONG2019252}. 
\end{remark}

Next, denote by $\mathbf{n}(x,y)=(n_1(x,y),n_2(x,y))$  the unit outward normal vector at boundary point $(x,y) \in \partial \Omega.$ Thanks to the hyperbolicity, the matrix $[n_1(x,y)A_1+n_2(x,y)A_2]$ can be diagonalized, i.e. there exists an invertible matrix $P=P(x,y)$ such that
\begin{equation} \label{5}
	\begin{aligned}
		\Lambda=\Lambda(x,y)&=P^{-1}[n_1(x,y)A_1+n_2(x,y)A_2]P\\& =\begin{pmatrix}
			\Lambda_+(x,y) & 0 & 0 \\
			0  & 0 & 0 \\
			0 & 0 & \Lambda_-(x,y)
		\end{pmatrix}.
	\end{aligned}
\end{equation}
Here, $\Lambda_+(x,y) \in \mathbb{R}^{p \times p}$ and $\Lambda_-(x,y)  \in \mathbb{R}^{n \times n}$ are both diagonal; they contain positive and negative eigenvalues of $[n_1(x,y)A_1+n_2(x,y)A_2],$ respectively; $p=p(x,y)$ and $n=n(x,y)$ are the numbers of the positive and negative eigenvalues, respectively. Note that $p$ and $n$ depend on the boundary point $(x,y).$ Remarkably, we do not assume that the boundary is non-characteristic.

With the notations just defined, we formulate the following fact. It is similar to Lemma 3.3 in \cite{HERTY201612} and exposes an important relation between the transformation matrix $P=P(x,y)$ in (\ref{5}) and the symmetrizer $A_0$ in the assumption (i).
\\

\begin{lemma} \label{l2}
	Under assumption (i), there exist three symmetric positive definite matrices $X_+=X_+(x,y) \in \mathbb{R}^{p\times p}, X_-=X_-(x,y) \in \mathbb{R}^{n\times n},X_0=X_0(x,y) \in \mathbb{R}^{(N-p-n)\times (N-p-n)}$ such that 
	\begin{equation*}
		P^{T}(x,y)A_0P(x,y)=\begin{pmatrix}
			X_+(x,y) & 0  & 0\\
			0 & X_0(x,y) & 0 \\
			0 & 0 & X_-(x,y)
		\end{pmatrix}.
	\end{equation*}
\end{lemma}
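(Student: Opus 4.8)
The plan is to exploit the symmetry of $A_0 M$, where $M := n_1 A_1 + n_2 A_2$ is the matrix diagonalized in (\ref{5}). First I would observe that, since $n_1$ and $n_2$ are real scalars and both $A_0 A_1$ and $A_0 A_2$ are symmetric by assumption (i), the matrix $A_0 M = n_1 A_0 A_1 + n_2 A_0 A_2$ is symmetric. Using $A_0^T = A_0$, this symmetry is equivalent to the identity $A_0 M = M^T A_0$, which says precisely that $M$ is self-adjoint with respect to the inner product $\langle u, v\rangle := u^T A_0 v$.

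Next I would partition $P = (P_+,\, P_0,\, P_-)$ according to the block structure in (\ref{5}), so that the columns of $P_+, P_0, P_-$ are eigenvectors of $M$ associated with the positive, zero, and negative eigenvalues, respectively; the defining relations are $M P_+ = P_+ \Lambda_+$, $M P_0 = 0$, and $M P_- = P_- \Lambda_-$. Since $P^T A_0 P$ is symmetric, it suffices to show that its three upper off-diagonal blocks $P_+^T A_0 P_0$, $P_+^T A_0 P_-$, and $P_0^T A_0 P_-$ all vanish.

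Each off-diagonal block is eliminated by evaluating a product of the form $P_\bullet^T A_0 M P_\bullet$ in two ways: once by letting $M$ act to the right via the eigenrelations, and once by rewriting $A_0 M$ as $M^T A_0$ and letting $M$ act to the left. For the $(+,-)$ block this yields the Sylvester equation $\Lambda_+\, Y = Y\, \Lambda_-$ with $Y := P_+^T A_0 P_-$; because $\Lambda_+$ has strictly positive entries and $\Lambda_-$ strictly negative ones, the two matrices share no eigenvalue and the equation forces $Y = 0$. The blocks involving $P_0$ are easier: the relation $M P_0 = 0$ collapses one side to zero, while the other becomes an invertible $\Lambda_\pm$ times the block, so the block vanishes. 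Once the off-diagonal blocks are gone, I set $X_+ := P_+^T A_0 P_+$, $X_0 := P_0^T A_0 P_0$, $X_- := P_-^T A_0 P_-$; each is symmetric by construction, and positive definite because the relevant columns of $P$ are linearly independent (they form part of the invertible matrix $P$) while $A_0$ is positive definite.

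The only step demanding real care is the $(+,-)$ block, where the Sylvester equation $\Lambda_+ Y = Y \Lambda_-$ admits a nonzero solution exactly when $\Lambda_+$ and $\Lambda_-$ have a common eigenvalue. This is ruled out precisely by the separation of positive from negative eigenvalues built into (\ref{5}), so this is where the hyperbolic splitting is genuinely used; I expect it to be the main (and essentially only) obstacle, the remaining blocks reducing to the triviality that an invertible diagonal matrix has trivial kernel.
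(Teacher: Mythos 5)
Your proposal is correct and is essentially the paper's own argument: the paper derives the identity $P^TA_0P\,\Lambda=\Lambda\,P^TA_0P$ from the symmetry of $A_0(n_1A_1+n_2A_2)$ and concludes block-diagonality from the fact that a symmetric matrix commuting with $\Lambda$ must respect the positive/zero/negative eigenvalue split; your blockwise Sylvester-equation computation is just an explicit unpacking of that same commutation relation. The positive-definiteness of the diagonal blocks is obtained in both cases from the invertibility of $P$ and the positive definiteness of $A_0$.
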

\begin{proof}
	Fix the boundary point $(x,y)$ and the dependence on $(x,y)$ may be omitted throughout this proof. Observe that 
	$$
	P^{-1}[n_1 A_1+n_2 A_2]P=\Lambda=P^T[n_1 A_1+n_2 A_2]^TP^{-T},
	$$
	where $\Lambda$ is given in (\ref{5}). Since $A_0(n_1 A_1+n_2 A_2)=(n_1 A_1+n_2 A_2)^TA_0,$ we have $P^TA_0P\Lambda=\Lambda P^TA_0 P.$ Namely, the diagonal matrix $\Lambda$ commutes with the symmetric matrix $P^TA_0P.$ Thus the latter is of block-diagonal with $X_+,X_-$ and $X_0$ of proper dimensions. Since $A_0$ is positive definite, so are $X_+,X_-$ and $X_0$.
\end{proof}

Furthermore, with $P=P(x,y)$ defined in (\ref{5}) we introduce
\begin{equation} \label{6}
	\zeta(t,x,y)=\begin{pmatrix}
		\zeta_+(t,x,y) \\
		\zeta_0(t,x,y) \\
		\zeta_-(t,x,y)
	\end{pmatrix}:=P^{-1}(x,y)U(t,x,y).
\end{equation}
Here $\zeta_+=\zeta_+(t,x,y) \in \mathbb{R}^p,\zeta_-=\zeta_-(t,x,y) \in \mathbb{R}^n$ and $\zeta_0=\zeta_0(t,x,y) \in \mathbb{R}^{N-p-n}$ correspond to the partition in (\ref{5}).
According to \cite{10.1093/acprof:oso/9780199211234.001.0001,doi:10.1137/1020095}, the boundary conditions should assign the incoming variables $\zeta_-$ in terms of the outgoing variables $\zeta_+$ at each boundary point. For properly given boundary conditions and initial data, the existence and uniqueness of $L^2$-solutions $U=U(t,x,y)$ for system (\ref{3}) can be found in \cite{10.1093/acprof:oso/9780199211234.001.0001} and the references therein.

\section{Main results} \label{se3}

Inspired by \cite{YONG2019252}, we refer to the assumptions (i)-(iii) and consider the following Lyapunov function
\begin{equation} \label{7}
	L(U(t))= \iint_{\Omega} \lambda(x,y) U^T(t,x,y) A_0 U(t,x,y) dxdy
\end{equation}
with 
$$
\lambda(x,y)=K+\alpha x+\beta y>0, \quad  \forall (x,y) \in \overline{\Omega},
$$ 
where $K$ is a positive constant. Such a constant $K$ is available for bounded domain $\Omega$ and will be chosen in the proof of Lemma \ref{l1} below. This Lyapunov function takes advantage of the
structural stability condition. It is obvious that $L(U(t))$ is equivalent to 
$$
\| U(t) \|^2=\| U(t,\cdot) \|^2:=\int_{\Omega} U^T(t,x,y) U(t,x,y) dxdy.
$$
Our main result is \\

\begin{theorem} \label{t1}
	Under the assumptions (i),(ii) and (iii), there exist proper boundary conditions such that the corresponding solutions $U=U(t,x,y)$ to system (\ref{3}) with initial data $U_0=U_0(x,y)\in L^2(\Omega)$ are exponentially stable. Namely, there exist positive constants $\nu$ and $C$ such that 
	\begin{equation*}
		\| U(t) \| \leq C\exp(-\nu t) \| U_0 \|.
	\end{equation*}
\end{theorem}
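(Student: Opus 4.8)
The plan is to prove that, for appropriately designed boundary conditions, the Lyapunov function $L(U(t))$ in (\ref{7}) satisfies a differential inequality $\tfrac{d}{dt}L(U(t))\le-\nu_0 L(U(t))$ for some constant $\nu_0>0$; the asserted exponential decay then follows from Gronwall's inequality together with the norm equivalence $L(U(t))\sim\|U(t)\|^2$ recorded after (\ref{7}). I would first carry out the computation for smooth solutions and recover the general $L^2$ case at the end by density. Differentiating along a solution of (\ref{3}) and using the symmetry of $A_0$,
\[
\frac{d}{dt}L(U(t))=-\iint_{\Omega}2\lambda\,U^TA_0\Big(A_1U_x+A_2U_y+\begin{pmatrix}0 & 0\\ 0 & e\end{pmatrix}U\Big)\,dxdy .
\]
By assumption (i) the matrices $A_0A_1,A_0A_2$ are symmetric, so $2\lambda U^TA_0A_1U_x=\lambda\partial_x(U^TA_0A_1U)$ and similarly in $y$. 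Applying the divergence theorem on the piecewise-smooth $\partial\Omega$ and using $\nabla\lambda=(\alpha,\beta)$ yields
\[
\begin{aligned}
\frac{d}{dt}L={}&-\oint_{\partial\Omega}\lambda\,U^TA_0(n_1A_1+n_2A_2)U\,ds\\
&+\iint_{\Omega}U^TA_0(\alpha A_1+\beta A_2)U\,dxdy-\iint_{\Omega}\lambda\,q^T(X_2e+e^TX_2)q\,dxdy,
\end{aligned}
\]
which splits the problem into an interior estimate and a boundary estimate.

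For the interior part — the role of Lemma \ref{l1}, where the constant $K$ gets fixed — I would proceed as follows. The matrix $A_0(\alpha A_1+\beta A_2)$ is symmetric, and by (i) its leading block $X_1(\alpha a_1+\beta a_2)$ is symmetric; combined with assumption (iii), a congruence argument (conjugating by $X_1^{1/2}$) shows this block is negative definite, so $u^TX_1(\alpha a_1+\beta a_2)u\le-\delta|u|^2$ for some $\delta>0$. Bounding the off-diagonal block by Young's inequality (absorbing a fraction of $\delta|u|^2$) and the remaining block crudely gives $U^TA_0(\alpha A_1+\beta A_2)U\le-\tfrac{\delta}{2}|u|^2+C|q|^2$. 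By assumption (ii), $q^T(X_2e+e^TX_2)q\ge 2\mu|q|^2$ for some $\mu>0$. Choosing $K$ large enough that $2\mu\min_{\overline\Omega}\lambda>C$, the two interior integrals combine into $-c_0\|U\|^2$ with $c_0>0$, hence into $-\nu_0 L$ by the norm equivalence.

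The boundary term is the crux. Writing $U=P\zeta$ from (\ref{6}) and invoking Lemma \ref{l2} with (\ref{5}),
\[
U^TA_0(n_1A_1+n_2A_2)U=\zeta^T(P^TA_0P)\Lambda\zeta=\zeta_+^TX_+\Lambda_+\zeta_++\zeta_-^TX_-\Lambda_-\zeta_- .
\]
Each block $X_\pm\Lambda_\pm$ is symmetric (being a diagonal block of the symmetric matrix $P^TA_0(n_1A_1+n_2A_2)P$) and similar to $X_\pm^{1/2}\Lambda_\pm X_\pm^{1/2}$, so $X_+\Lambda_+$ is positive definite and $X_-\Lambda_-$ is negative definite. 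Following the prescription after (\ref{6}) that the incoming variables $\zeta_-$ be assigned in terms of the outgoing $\zeta_+$, I would impose the control law $\zeta_-(t,x,y)=S(x,y)\zeta_+(t,x,y)$ on $\partial\Omega$ with $S$ (for instance $S\equiv0$) small enough that $X_+\Lambda_++S^TX_-\Lambda_-S\succeq0$ at every boundary point. The boundary integrand then equals $\lambda\,\zeta_+^T(X_+\Lambda_++S^TX_-\Lambda_-S)\zeta_+\ge0$, so the boundary contribution to $\tfrac{d}{dt}L$ is nonpositive. Together with the interior estimate this gives $\tfrac{d}{dt}L\le-\nu_0 L$, and Gronwall's inequality with $L\sim\|\cdot\|^2$ yields the claim with $\nu=\nu_0/2$.

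The hard part will be the boundary analysis: correctly diagonalizing the boundary quadratic form through Lemma \ref{l2}, establishing the definiteness of $X_\pm\Lambda_\pm$, and selecting an admissible feedback $S$ that keeps the boundary term of the right sign uniformly along the (possibly characteristic) boundary, including at the corners of a polygonal $\Omega$. A secondary technical point is justifying the differentiation of $L$ and the integration by parts for merely $L^2$ data, which I would handle by approximating the initial data by smooth ones, deriving the estimate for the regular solutions, and passing to the limit.
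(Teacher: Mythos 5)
Your proposal is correct and follows essentially the same route as the paper: the weighted Lyapunov function with $\lambda=K+\alpha x+\beta y$, the interior estimate absorbing the cross terms via a large $K$ (the paper's constant $\chi$ plays the role of your $C/2\mu$), the boundary decomposition through Lemma \ref{l2} into the definite blocks $X_\pm\Lambda_\pm$, and the choice of a small feedback (the paper likewise settles for the trivial case $\zeta_-=0$) to make the boundary term nonnegative. No substantive differences to report.
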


By boundary stabilization, we mean that choosing proper boundary conditions ensures the exponential stability.  The proof of this theorem relies on the following lemma. \\

\begin{lemma} \label{l1}
	Under the condition of Theorem \ref{t1}, there exists a positive constant $\nu$ such that
	\begin{equation*}
		\frac{d}{dt} L(U(t))+\mathcal{BC} \leq -\nu L(U(t)).
	\end{equation*}
	Here
	\begin{equation} \label{8}
		\mathcal{BC}=\int_{\partial \Omega} \lambda(x,y) U^TA_0(n_1A_1+ n_2A_2)U  d\ell,
	\end{equation}
	where $\mathbf{n}=(n_1,n_2)$ is the outward unit normal vector defined before and $d\ell$ denotes the Lebesgue measure of the boundary $\partial \Omega.$
\end{lemma}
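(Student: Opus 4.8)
The plan is to differentiate $L(U(t))$ directly, substitute the evolution equation (\ref{3}), and use the symmetry of $A_0A_1$ and $A_0A_2$ granted by assumption (i) to recast the convective terms as exact divergences. Since $A_0$ is symmetric,
\[
\frac{d}{dt}L(U(t)) = \iint_{\Omega}\lambda\,\partial_t\big(U^TA_0U\big)\,dxdy = \iint_{\Omega}2\lambda\,U^TA_0U_t\,dxdy,
\]
and inserting $U_t=-A_1U_x-A_2U_y-\begin{pmatrix}0&0\\0&e\end{pmatrix}U$ together with the identities $2U^TA_0A_1U_x=\partial_x(U^TA_0A_1U)$ and $2U^TA_0A_2U_y=\partial_y(U^TA_0A_2U)$ turns the convective part into a divergence.

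First I would integrate that divergence by parts. Because $\lambda=K+\alpha x+\beta y$ has the constant gradient $(\alpha,\beta)$, the boundary contribution is precisely $-\mathcal{BC}$ from (\ref{8}), while the gradient of $\lambda$ feeds back the interior quadratic form with matrix $A_0(\alpha A_1+\beta A_2)$. This produces the exact identity
\[
\frac{d}{dt}L(U(t)) + \mathcal{BC} = \iint_{\Omega}\Big[\,U^TA_0(\alpha A_1+\beta A_2)U - 2\lambda\,U^TA_0\begin{pmatrix}0&0\\0&e\end{pmatrix}U\,\Big]\,dxdy .
\]
The lemma then reduces to a pointwise bound: it suffices to show the integrand is $\le -\nu'\,U^TA_0U$ for some $\nu'>0$, since $L(U(t))$ and $\|U(t)\|^2$ are equivalent.

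The core of the argument, and the step I expect to be the main obstacle, is establishing this pointwise negativity after choosing $K$ appropriately. Writing $U=(u^T,q^T)^T$ and $A_0=\diag(X_1,X_2)$, the relaxation term equals $\lambda\,q^T(X_2e+e^TX_2)q$, which by assumption (ii) bounds below $c\lambda|q|^2$ for some $c>0$. For the convective term, the structural stability condition forces $X_1(\alpha a_1+\beta a_2)$ to be symmetric; since assumption (iii) gives $\alpha a_1+\beta a_2$ only negative eigenvalues, a congruence by $X_1^{1/2}$ shows $X_1(\alpha a_1+\beta a_2)$ is negative definite and hence contributes $-c'|u|^2$. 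The off-diagonal and $qq$-blocks of $A_0(\alpha A_1+\beta A_2)$ are bounded quadratics; absorbing the $uq$ cross terms into $-c'|u|^2$ by Young's inequality leaves a residual $C|q|^2$ with $C$ independent of $K$. The decisive point is that on the bounded domain $\Omega$ the constant $K$, and thus $\min_{\overline{\Omega}}\lambda$, can be taken large enough that $c\lambda|q|^2$ dominates this residual; this fixes the value of $K$ promised before the lemma and yields the pointwise estimate $-c_5\big(|u|^2+|q|^2\big)$ for some $c_5>0$.

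Finally I would assemble the conclusion. Integrating the pointwise bound gives $\frac{d}{dt}L+\mathcal{BC}\le -c_5\|U(t)\|^2$, and the equivalence $m\|U\|^2\le L(U)\le M\|U\|^2$ yields $\frac{d}{dt}L+\mathcal{BC}\le -\nu L(U(t))$ with $\nu=c_5/M$. The formal differentiation under the integral and the integration by parts are justified first for smooth solutions and then extended to the $L^2$ data by a density argument.
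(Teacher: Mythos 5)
Your proposal is correct and follows essentially the same route as the paper: both derive the identity $\frac{d}{dt}L+\mathcal{BC}=\iint_\Omega\big[U^TA_0(\alpha A_1+\beta A_2)U-\lambda\,q^T(X_2e+e^TX_2)q\big]\,dxdy$ (the paper by forming the pointwise divergence identity and then applying Green's formula, you by differentiating the integral and integrating by parts), then use negative definiteness of $X_1(\alpha a_1+\beta a_2)$, absorb the cross terms by Young's inequality, and choose $K$ large so the relaxation term dominates the residual in $|q|^2$. No gaps.
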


\begin{proof}
	Under assumptions (i),(ii) and (iii), we multiply the system (\ref{3}) or (\ref{4}) with $\lambda(x,y)U^TA_0$ from the left and use the expression $\lambda(x,y)=K+\alpha x+\beta y$ to obtain 
	\begin{equation*}
		\begin{aligned}
			&(\lambda(x,y)U^TA_0U)_t+(\lambda(x,y)U^TA_0A_1U)_x \\[4mm] &  +	 (\lambda(x,y)U^TA_0A_2U)_y \\[4mm] 
			=&\alpha U^TA_0A_1U+\beta U^TA_0A_2U -\lambda(x,y)q^T(X_2e+e^TX_2)q. 
		\end{aligned}
	\end{equation*}
	Notice that both $-(X_2e+e^TX_2)$ and $X_1(\alpha a_1 + \beta a_2)$
	are symmetric and strictly negative definite. There exists a large constant
	$\chi > 0$ such that
	\begin{equation*}
		\begin{aligned}
			& \quad \ \alpha U^TA_0A_1U+\beta U^TA_0A_2U \\[4mm] 
			&=u^TX_1(\alpha a_1+\beta 
			a_2)u+q^TX_2(\alpha c_1+ \beta c_2)u \\[4mm]  & \quad +u^TX_1(\alpha b_1+ \beta b_2) q+q^TX_2(\alpha d_1 + \beta d_2) q \\[4mm] 
			&\leq \frac{1}{2}u^TX_1(\alpha a_1+ \beta a_2)u+\chi q^T(X_2e+e^TX_2)q.
		\end{aligned}
	\end{equation*}
	Now we choose $K$ in the expression of $\lambda(x,y)$ so that
	\begin{equation*}
		\lambda(x,y)-\chi>\frac{1}{2}\lambda(x,y)>0, \quad \forall (x,y) \in \overline{\Omega}.
	\end{equation*}
	With such a $K$, we have 
	\begin{equation*}
		\begin{aligned}
			&(\lambda(x,y)U^TA_0U)_t+(\lambda(x,y)U^TA_0A_1U)_x \\[4mm]  &  +	 (\lambda(x,y)U^TA_0A_2U)_y   \\[4mm] 
			\leq & \frac{1}{2}u^TX_1(\alpha a_1+\beta a_2)u-\frac{1}{2}\lambda(x,y) q^T(X_2e+e^TX_2)q.
		\end{aligned}
	\end{equation*}
	Denote by $\sigma$ the minimum of smallest eigenvalues of the symmetric positive definite matrices $-X_1(\alpha a_1+\beta a_2)$ and $\lambda(x,y)(X_2e+e^TX_2)$ over $(x,y) \in \Omega.$ We integrate the last inequality over $(x,y) \in \Omega$ and use Green's formula to get
	\begin{equation*}
		\frac{d}{dt} L(U(t))+\mathcal{BC} \leq -\frac{\sigma}{2} \| U(t)\|^2 \leq -\frac{\sigma}{2\lambda_{\max}\rho(A_0)}L(U(t)),
	\end{equation*}
	with $\mathcal{BC}$ the boundary term defined in (\ref{8}), where $\lambda_{\max}$ is the supremum of $\lambda(x,y)$ in $\bar{\Omega}$ and $\rho(A_0)$ is the maximum eigenvalue of $A_0.$ Hence the lemma follows with
	\begin{equation*}
		\nu=\frac{\sigma}{2\lambda_{\max}\rho(A_0)}
	\end{equation*}
	and the proof is complete.
\end{proof} 

Thanks to Lemma \ref{l1}, Theorem \ref{t1} follows immediately provided that $\mathcal{BC} \geq 0$. In view of Lemma \ref{l2} and formulas (\ref{5}) and (\ref{6}), the boundary term $\mathcal{BC}$ can be written as
\begin{equation*}
	\begin{aligned}
		\mathcal{BC}&=\int_{\partial \Omega} \lambda(x,y)U^TA_0(n_1A_1+ n_2A_2)U  d\ell \\
		&=\int _{\partial \Omega} \lambda(x,y)(P^{-1}U)^T\begin{pmatrix}
			X_+\Lambda_+ & 0&0 \\
			0 & 0 & 0 \\
			0 & 0 & X_-\Lambda_-
		\end{pmatrix}P^{-1}U  d\ell \\
		&=\int _{\partial \Omega} \lambda(x,y)\zeta_+^T
		X_+\Lambda_+ \zeta_+ d\ell \\& \quad + \int _{\partial \Omega} \lambda(x,y)\zeta_-^T X_-\Lambda_- \zeta_- d\ell.
	\end{aligned}
\end{equation*}
Recall that $X_+$ and $\Lambda_+$ are symmetric positive definite, and $X_+\Lambda_+$ is symmetric, thus $X_+\Lambda_+$ is also positive definite. Similarly, $X_-\Lambda_-$ is negative definite. Hence we have  $$\int _{\partial \Omega} \lambda(x,y) \zeta_+^TX_+\Lambda_+\zeta_+ d\ell \geq 0$$ and $$\int _{\partial \Omega} \lambda(x,y)\zeta_-X_-\Lambda_-\zeta_- d\ell \leq 0.$$ Thus the theorem is proved if
\begin{equation*} 
	\int _{\partial \Omega} \lambda(x,y)\zeta_+^T
	X_+\Lambda_+ \zeta_+ d\ell \geq -\int _{\partial \Omega} \lambda(x,y)\zeta_-^T X_-\Lambda_- \zeta_- d\ell.
\end{equation*}
This inequality holds true at least in the trivial case that the incoming variables $\zeta_-$ are chosen to be zero.

\section{Applications to the Saint-Venant equations} \label{se5}
In the previous section, we have shown that the boundary stabilization can be achieved for systems satisfying the assumptions (i)-(iii) and defined on a bounded domain. Here, we present workable boundary conditions or control laws for hydraulic systems described by the Saint-Venant equations in two dimensions.

For the sake of simplicity, we consider a rectangular open channel and ignore the turbulent diffusion and wind stress. Denote the rectangular channel by $(0,L) \times (0,1)$ with $L$ a large positive constant for the distance between two sluices in the channel. The hydraulic system is characterized by the water depth $H=H(t,x,y)$, the  velocity $W=W(t,x,y)$ in the $x$-direction and the velocity $V=V(t,x,y)$ in the $y$-direction. 

When the velocities are small, the fluid resistance is linearly proportional to the velocity. In this situation, the dynamics of the hydraulic system is described by the  Saint-Venant equations
\begin{equation*}
	\begin{aligned}
		&\frac{\partial H}{\partial t} + W \frac{\partial H}{\partial x}+H \frac{\partial W}{\partial x}+V \frac{\partial H}{\partial y}+H \frac{\partial V}{\partial y}=0, \\
		&\frac{\partial W}{\partial t} + g\frac{\partial H}{\partial x}+W\frac{\partial W}{\partial x}+V\frac{\partial W}{\partial y}=gS_x-kW+lV, \\
		&\frac{\partial V}{\partial t} +W\frac{\partial V}{\partial x}+g\frac{\partial H}{\partial y}+ V\frac{\partial V}{\partial y}=gS_y-lW-kV.
	\end{aligned}
\end{equation*}
This is slightly different from the two-dimensional Saint-Venant equations in \cite{LICH1992498} because the velocities are assumed to be small.
Here $g$ is the gravity constant, $S_x$ is the bottom slope
of the channel in the $x$-direction, $S_y$ is the bottom slope
of the channel in the $y$-direction, $k$ is the viscous drag coefficient, $l$ is the Coriolis coefficient associated with the Coriolis force, and $k$ and $l$ are positive constants. 

For this system, a steady state is a constant state $(H^*,W^*,V^*)$ satisfying the conditions
\begin{equation*}
	gS_x=kW^*-lV^*, \quad 
	gS_y=kV^*+lW^*.
\end{equation*}
Define  
\begin{equation*}
	\begin{aligned}
		h(t,x,y)&=H(t,x,y)-H^*, \\
		w(t,x,y)&=W(t,x,y)-W^*, \\
		v(t,x,y)&=V(t,x,y)-V^*,
	\end{aligned}
\end{equation*} 
as deviations of the steady state.
Then linearizing the Saint-Venant equations around the steady state yields
\begin{equation} \label{9}
	\frac{\partial}{\partial t} \begin{pmatrix}
		h \\ w \\ v
	\end{pmatrix}+\bar{A}_1	\frac{\partial}{\partial x} \begin{pmatrix}
		h \\ w \\ v
	\end{pmatrix}+	\bar{A}_2\frac{\partial}{\partial y} \begin{pmatrix}
		h \\ w \\ v
	\end{pmatrix} = B \begin{pmatrix}
		h \\ w \\ v
	\end{pmatrix}
\end{equation} 
with 
\begin{equation*}
	\begin{aligned}
		\bar{A}_1&=\begin{pmatrix}
			W^* & H^* & 0 \\ g & W^* & 0 \\ 0 & 0 & W^*
		\end{pmatrix}, \bar{A}_2=\begin{pmatrix}
			V^* & 0 & H^* \\ 0 & V^* & 0 \\ g & 0 & V^*
		\end{pmatrix}, \\ B&=\begin{pmatrix}
			0 & 0 & 0 \\ 0 & -k & l \\ 0 & -l & -k
		\end{pmatrix}.
	\end{aligned}
\end{equation*}
Set $\tilde{h}=\sqrt{\frac{g}{H^*}} h$. Then system (\ref{9}) is equivalent to its symmetric version
\begin{equation} \label{10}
	\frac{\partial}{\partial t} \begin{pmatrix}
		\tilde{h} \\ w \\ v
	\end{pmatrix}+A_1	\frac{\partial}{\partial x} \begin{pmatrix}
		\tilde{h} \\ w \\ v
	\end{pmatrix}+	A_2\frac{\partial}{\partial y} \begin{pmatrix}
		\tilde{h} \\ w \\ v
	\end{pmatrix} = B \begin{pmatrix}
		\tilde{h} \\ w \\ v
	\end{pmatrix}
\end{equation} 
with 
\begin{equation*}
	\begin{aligned}
		A_1&=\begin{pmatrix}
			W^* & \sqrt{gH^*} & 0 \\ \sqrt{gH^*} & 	W^*  & 0 \\ 0 & 0 & 	W^* 
		\end{pmatrix}, A_2=\begin{pmatrix}
			V^* & 0 & 	\sqrt{gH^*}  \\ 0 & V^* & 0 \\ \sqrt{gH^*} & 0 & V^*
		\end{pmatrix}, \\ B&=\begin{pmatrix}
			0 & 0 & 0 \\ 0 & -k & l \\ 0 & -l & -k
		\end{pmatrix}.
	\end{aligned}
\end{equation*}

Observe that $A_1$ and $A_2$ are symmetric, and
\begin{equation*}
	B+B^T=\begin{pmatrix}
		0 & 0 & 0 \\ 0 &	-2k & 0 \\ 0 & 0 & -2k
	\end{pmatrix}.
\end{equation*}
Therefore the system (\ref{10}) satisfies assumption (i) and (ii) with $A_0=I_3$. Furthermore, the assumption (iii) holds if 
$$
(W^*,V^*)\neq (0,0),
$$
which is particularly true if $W^*>0$ and $V^*=0.$ This means that the steady water flows only in the $x$-direction.

Assume
$$
0<W^*<\sqrt{gH^*},\qquad  V^*=0, 
$$
which is consistent with the previous assumption that the water velocity is small. Thus we can choose the weighted function 
$\lambda(x,y)$ in (\ref{7}) to be $\lambda(x,y)=2L-x$.  In this case, the boundary term (\ref{8}) for the system (\ref{10}) becomes
\begin{equation} \label{11}
	\begin{aligned}
		&\mathcal{BC}
		=
		-2\sqrt{gH^*}\int_{0}^{L} [(2L-x) (\tilde{h}v)]\big|_{y=0}dx\\&\qquad \ +2\sqrt{gH^*}\int_{0}^{L}  [(2L-x)(\tilde{h}v)]\big|_{y=1} dx \\
		&+LW^*\int_0^1 [v^2+\tilde{h}^2+2\frac{\sqrt{gH^*}}{W^*}\tilde{h}w+w^2]\big|_{x=L}dy \\
		&-2LW^*\int_0^1 [v^2+\tilde{h}^2+2\frac{\sqrt{gH^*}}{W^*}\tilde{h}w+w^2]\big|_{x=0} dy.
	\end{aligned}
\end{equation}

To proceed, we refer to \cite{chanson2004hydraulics,DIAGNE2012109} and make the following assumptions
\begin{enumerate}
	\item [(a)] The river banks are solid walls except a spillway constructed in the lower bank.
	
	\item [(b)] Sluice gates are located at the left and right edges.
	
	\item [(c)]  Both sluice gates and spillway can be used to regulate the water velocities and to measure the water level.
\end{enumerate}

Assumption (a) implies that the normal velocity $V^*+v=v=0$ at the banks except the spillway. Suppose the spillway is located in $\{(x,y)\in (\frac{L}{3},\frac{2L}{3})\times \{0\}\}.$  Then the first two integrals in the boundary term (\ref{11}) become 
$$
-2\sqrt{gH^*}\int_{L/3}^{2L/3} [(2L-x) (\tilde{h}v)]\big |_{y=0} dx.
$$

Thanks to Assumptions (b) and (c), we regulate the velocities $w(t,0,y)$ and $w(t,L,y)$ so that
\begin{align}
	\label{12}   &\int_0^1 \tilde{h}w|_{x=L}dy + \frac{W^*}{2\sqrt{gH^*}}\int_0^1 \tilde{h}^2\big|_{x=L}dy \geq 0, \\
	\label{13}  &\int_0^1 \tilde{h}w|_{x=0}dy  + \frac{W^*}{2\sqrt{gH^*}}\int_0^1 (\tilde{h}^2+w^2)\big|_{x=0}dy \leq 0
\end{align}
according to the measured water levels $h(t,0,y),h(t,L,y)$ at sluice gates. Then the last two integrals in (\ref{11}) are greater than
$$
-2LW^* \int_{0}^{1} v^2|_{x=0} dy.
$$

Hence the boundary term $\mathcal{BC}\geq 0$ provided that
\begin{equation} \label{14}
	\begin{aligned}
		\sqrt{gH^*}\int_{L/3}^{2L/3} [(2L-x) (\tilde{h}v)]\big |_{y=0} dx&\\ +L W^*\int_0^1v^2\big |_{x=0}dy&\leq 0.
	\end{aligned}
\end{equation}
Thanks to Assumption (c), we regulate the flow velocity $v(t,x,0)$ at the spillway and $v(t,0,y)$ at the left sluice gate such that the inequality (\ref{14}) holds. In this way, boundary stabilization can be achieved for the system.

Now we derive boundary control laws for the system. By computing the matrix $P=P(x,y)$ in (\ref{5}), we can find the incoming and outgoing variables at each boundary point $(x,y).$ At the upper bank, the incoming variable is $\tilde{h}-v$ and the outgoing variable is $\tilde{h}+v.$ Thus the solid wall boundary condition $v(t,x,1)=0$ can be written as the classic form of boundary conditions \cite{higdon1986initial} for the first-order hyperbolic system (\ref{10}):
$$
[\tilde{h}-v](t,x,1)=[\tilde{h}+v](t,x,1), \qquad x \in (0,L).
$$
Similarly, for the lower bank the boundary condition is  
$$
[\tilde{h}+v](t,x,0)=[\tilde{h}-v](t,x,0), \qquad x \in (0,\frac{L}{3}) \cup (\frac{2L}{3},L).
$$

At the right sluice gate, the incoming variable is $\tilde{h}-w$ and the outgoing variables are $\tilde{h}+w$ and $v$. We refer to the classic theory \cite{higdon1986initial} and specify the boundary condition as
\begin{equation} \label{15}
	[\tilde{h}-w](t,L,y)=k_1[\tilde{h}+w](t,L,y), \quad y \in (0,1) 
\end{equation}
with $k_1$ a constant. With this boundary condition, the left-hand side of (\ref{12}) becomes
$$
\int_0^1(\frac{1-k_1}{1+k_1}+\frac{W^*}{2\sqrt{gH^*}})\tilde{h}^2|_{x=L}dy.
$$
Therefore the inequality (\ref{12}) holds for $k_1 \in (-1,1).$

At the left sluice gate, the incoming variables are $\tilde{h}+w$ and $v$, and the outgoing variable is $\tilde{h}-w.$ The classic theory \cite{higdon1986initial} suggests to give $\tilde{h}+w$ and $v$ in terms of $\tilde{h}-w.$ For $\tilde{h}+w$ we take
\begin{equation} \label{16}
	[\tilde{h}+w](t,0,y)=k_2[\tilde{h}-w](t,0,y), \quad y \in (0,1), 
\end{equation}
with $k_2$ to be determined. With this, the left-hand side of (\ref{13}) becomes
$$
\int_0^1\left[\frac{W^*}{2\sqrt{gH^*}}(\frac{k_2-1}{k_2+1})^2+\frac{k_2-1}{k_2+1}+\frac{W^*}{2\sqrt{gH^*}}\right]  \tilde{h}^2|_{x=0}dy.
$$
Note that the quadratic function of $ \frac{k_2-1}{k_2+1}$ in the above integral:
$$
\frac{W^*}{2\sqrt{gH^*}}(\frac{k_2-1}{k_2+1})^2+\frac{k_2-1}{k_2+1}+\frac{W^*}{2\sqrt{gH^*}}
$$
has two real roots due to $W^*<\sqrt{gH^*}.$ Thus the inequality (\ref{13}) holds true if $k_2$ is chosen so that $\frac{k_2-1}{k_2+1}$ is between the two roots. The incoming variable $v(t,0,y)$ will be given below for the inequality (\ref{14}).

For the spillway, the incoming variable is $\tilde{h}+v$ and the outgoing variable is $\tilde{h}-v$. As before, we assign
\begin{equation} \label{17}
	[\tilde{h}+v](t,x,0)=k_3 [\tilde{h}-v](t,x,0) , \quad x \in (\frac{L}{3},\frac{2L}{3}),
\end{equation}
with $k_3$ a constant. Moreover, we specify the incoming variable $v(t,0,y)$ at the left sluice gate as
\begin{equation} \label{18}
	v(t,0,y)=k_4[\tilde{h}-v](t,\frac{L}{3}(y+1),0), \quad y \in (0,1),
\end{equation}
with $k_4$ a constant. Note that $\frac{L}{3}(y+1)\in (\frac{L}{3},\frac{2L}{3})$ for $y \in (0,1).$ The last two equalities give
\begin{equation*}
	\begin{aligned}
		&v(t,x,0)=\frac{k_3-1}{k_3+1}\tilde{h}(t,x,0), \quad  x \in (\frac{L}{3},\frac{2L}{3}), \\
		&v(t,0,y)=\frac{2k_4}{k_3+1}\tilde{h}(t,\frac{L}{3}(y+1),0), \quad y \in (0,1).
	\end{aligned}
\end{equation*}
With these, the left-hand side of the inequality (\ref{14}) can be written as
\begin{equation*}
	\begin{aligned}
		&LW^*\int_0^1  \frac{4k_4^2}{(k_3+1)^2}\tilde{h}^2(t,\frac{L}{3}(y+1),0) dy \\ + &\frac{\sqrt{gH^*}L^2}{9}\int_0^1 (5-y)\frac{k_3-1}{k_3+1} \tilde{h}^2(t,\frac{L}{3}(y+1),0) dy.
	\end{aligned}
\end{equation*}
This is non-positive if $k_3 \in (-1,1)$ and $|k_4|$ is small enough. 

In conclusion, the boundary conditions (\ref{15})-(\ref{18}) with properly chosen parameters $k_i \ (i=1,2,3,4)$ ensure that the boundary term $\mathcal{BC}$ is non-negative. Consequently, boundary stabilization is achieved for the two-dimensional linearized Saint-Venant equations.

\section{Concluding remarks}
In this paper, we propose a framework for boundary stabilization of two-dimensional first-order hyperbolic systems with relaxation. As an example, a set of control laws (\ref{15})-(\ref{18}) are chosen as boundary conditions for the Saint-Venant equations. 

Clearly, the choices of boundary conditions are numerous, especially for two-dimensional problems.
For example, the incoming variable at the spillway can linearly or even non-linearly depend on the information of two or even more parts of the boundary, as in \cite{bastin2016stability,DIAGNE2012109} for one-dimensional case. 
Furthermore, the framework can be easily extended to higher dimensional problems. Then, of course, it will be more complicated to design control laws.


\end{document}